\numberwithin{equation}{section}
\DeclareMathOperator{\dist}{dist}
\DeclareMathOperator{\card}{card}
\renewcommand{\phi}{\varphi}
\newtheorem{Thm}{Theorem}
\newtheorem{theorem}[Thm]{Theorem}
\newtheorem{lemma}[Thm]{Lemma}
\newtheorem{proposition}[Thm]{Proposition}
\newtheorem{remark}[Thm]{Remark}
\DeclareMathOperator{\Type}{\tt {Type}}
\DeclareMathOperator{\Cart}{{\tt Cart}}
\DeclareMathOperator{\Pw}{\tt PW_\pi}
\DeclareMathOperator{\Spa}{span}
\begin{document}

\title[Exponential approximation]{Exponential approximation and meromorphic interpolation}

\author{Yurii Belov, Alexander Borichev, Alexander Kuznetsov}
\address{
Yurii Belov:
\newline Department of Mathematics and Computer Science, St.~Petersburg State University, St. Petersburg, Russia
\newline {\tt j\_b\_juri\_belov@mail.ru}
\smallskip
\newline \phantom{x}\,\, Alexander Borichev:
\newline Institut de Math\'ematiques de Marseille,
Aix Marseille Universit\'e, CNRS, I2M, Marseille, France
\newline {\tt alexander.borichev@math.cnrs.fr}
\smallskip
\newline \phantom{x}\,\, Alexander Kuznetsov:
\newline Department of Mathematics and Computer Science, St.~Petersburg State University, St.~Petersburg, Russia
\newline {\tt alkuzn1998@gmail.com}
\smallskip
}
\thanks{The work of the third author was carried out with the financial support of the Ministry of Science and Higher Education of the Russian Federation in the framework of a scientific project under agreement No. 075-15-2024-631. 
} 

\begin{abstract} We establish a relation between the approximation in $L^2[-\pi,\pi]$ by exponentials with the set of frequencies of Beurling--Malliavin density less than $1$ and the meromorphic interpolation 
at $\mathbb Z$. Furthermore, 
we show that typical $L^2[-\pi,\pi]$ functions admit such an approximation.
\end{abstract}

\maketitle

\section{Introduction and main results}
Representations of square integrable functions via exponential series is a classical topic in analysis, see for example \cite{ss1}. For some exciting recent results see \cite{ss2,ss4,ss3}.   

It is well-known that the system of exponentials $\{e^{int}\}_{n\in\mathbb{Z}}$ is an orthonormal basis in $L^2[-\pi,\pi]$. Therefore, to reconstruct a function $f\in L^2[-\pi,\pi]$, 
we need to know $\{\widehat{f}(n)\}_{n\in\mathbb{Z}}$, and all these values are indispensable.

On the other hand, it looks plausible that given a generic function $f\in L^2[-\pi,\pi]$
we may need a smaller amount of exponentials, say with Beurling--Malliavin density strictly less than $1$.

In this paper we introduce a simple probability model and establish such efficient representation for this model. It seems that there is no canonical way to introduce a probability measure on $L^2[-\pi,\pi]$. Nevertheless, the model we consider here looks sufficiently natural. 

Let $\omega\in L^2(\mathbb R_+)$ be a decreasing function such that for some $\delta>0$ we have 
$x\omega(x)\ge \delta$ and $\omega(2x)\ge \delta\omega(x)$ for $x\ge 1$. Let $(\zeta_m)_{m\in\mathbb Z}$ be the sequence of independent standard Gaussian complex variables (complex random variables whose real and imaginary parts are independent normally distributed random variables with mean zero and variance $1/2$). 

We are going to consider {\it random } $f\in L^2[-\pi,\pi]$ defined by
$$
\widehat{f}(n)=\zeta_n \omega(|n|),\quad n\in\mathbb{Z}.
$$

Since $\omega\in L^2(\mathbb R_+)$, we have 
$$
\mathbb E\Bigl(\sum_{n\in\mathbb{Z}}|\widehat{f}(n)|^2\Bigr)=
\mathbb E\Bigl(\sum_{n\in\mathbb{Z}}|\zeta_n|^2\cdot \omega(|n|)^2\Bigr) = 
\sum_{n\in\mathbb{Z}}\omega(|n|)^2<\infty\,.
$$
Therefore, almost surely, $\{\widehat{f}(n)\}_{n\in\mathbb{Z}}\in\ell^2(\mathbb{Z})$, and hence, by the Parseval equality, almost surely, 
$f\in L^2[-\pi,\pi]$. 

\begin{theorem} There exist $\varepsilon>0$ 
such that for almost all random $f\in L^2[-\pi,\pi]$, we can find  
$\Lambda=\Lambda(f)\subset\mathbb R$ such that 
$D_{BM}(\Lambda)<1-\varepsilon$ and  
\begin{equation}
f\in\Spa\{e^{i\lambda t}: \lambda\in\Lambda\}.
\label{mainincl}
\end{equation}
\label{t1}
Moreover, the system $\{e^{i\lambda t}: \lambda\in\Lambda\}$ is a Riesz basis in its closed linear span in $L^2[-\pi,\pi]$, and
$$
f(t)=\sum_{\lambda\in\Lambda} a_\lambda e^{i\lambda t}
$$
with convergence in $L^2[-\pi,\pi]$, for some coefficients $(a_\lambda)_{\lambda\in\Lambda}\in\ell^2$.
\end{theorem}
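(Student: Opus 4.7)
The natural first step is to translate the conclusion into a meromorphic interpolation problem at the integers. Expanding $e^{i\lambda t}$ in the orthonormal basis $\{e^{int}\}_{n\in\mathbb Z}$ of $L^2[-\pi,\pi]$ gives the Fourier coefficient $(-1)^n\sin(\pi\lambda)/(\pi(\lambda-n))$, so for any $\Lambda\subset\mathbb R$ which is uniformly discrete and bounded away from $\mathbb Z$, the representation
\[
f=\sum_{\lambda\in\Lambda}a_\lambda e^{i\lambda t}\quad\text{in }L^2[-\pi,\pi],\qquad (a_\lambda)\in\ell^2,
\]
is equivalent, via Bessel for $\{e^{i\lambda t}\}$ in one direction and a Fourier coefficient comparison in the other, to the existence of a Cauchy-type meromorphic function
\[
M(z)=\sum_{\lambda\in\Lambda}\frac{\tilde a_\lambda}{\lambda-z},\qquad \tilde a_\lambda:=\frac{\sin(\pi\lambda)}{\pi}\,a_\lambda,
\]
with $(\tilde a_\lambda)\in\ell^2$ and $M(n)=(-1)^n\widehat f(n)$ for every $n\in\mathbb Z$. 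Thus Theorem~\ref{t1} is reduced to the following statement: almost surely in the Gaussian model, the random data $c_n:=(-1)^n\zeta_n\omega(|n|)$ admits such a pole interpolation at $\mathbb Z$ with $D(\Lambda)<1-\varepsilon$.

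To construct the interpolant I would proceed block-by-block. Partition $\mathbb Z$ into consecutive blocks $B_j$ of length $L_j$, and in each block place a candidate pole set $\Lambda_j\subset\mathbb R$ of cardinality $(1-\varepsilon)L_j$, chosen near $B_j$ but uniformly separated from $\mathbb Z$, and try to solve the localized Cauchy system
\[
\sum_{\lambda\in\Lambda_j}\frac{\tilde a^{(j)}_\lambda}{\lambda-n}=c_n,\qquad n\in B_j.
\]
This system is overdetermined, but the positions of the poles provide $(1-\varepsilon)L_j$ additional real degrees of freedom, strictly exceeding the codimension $\varepsilon L_j$ of the naive column range. A genericity/parameter-counting argument (varying $\Lambda_j$) should then show that for Gaussian random data $(c_n)_{n\in B_j}$ one can almost surely choose $\Lambda_j$ so that the data lies in the attainable range, with the solution $\tilde a^{(j)}$ controlled in $\ell^2$ norm by $\|c|_{B_j}\|_{\ell^2}$ up to a modest factor. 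Cross-block contributions $\tilde a_\lambda/(\lambda-n)$ with $\lambda\in\Lambda_j$, $n\in B_{j'}$, $j'\ne j$, decay like $|n-\lambda|^{-1}$ and perturb the data of the other blocks only slightly; these are absorbed by a Neumann-type iteration over blocks, producing a global solution with $\Lambda=\bigcup_j\Lambda_j$ of upper density at most $1-\varepsilon$ and $(\tilde a_\lambda)\in\ell^2$.

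The hypotheses on $\omega$ enter both through summability of the block norms---the doubling condition $\omega(2x)\ge\delta\omega(x)$ makes $\|c|_{B_j}\|_{\ell^2}$ vary tamely across blocks, so that summing block-wise residue bounds yields a global $\ell^2$ sequence and Borel--Cantelli can be applied---and through $x\omega(x)\ge\delta$, which rules out degenerate cancellation in the random data. The central obstacle is the local interpolation step: showing not merely solvability but a bounded-norm solution with high probability after tuning pole positions. This requires a quantitative anti-concentration input for Cauchy matrices evaluated on Gaussian vectors, and is precisely where the probabilistic hypothesis is used essentially; a naive parameter count is not enough. Once the local step is in place, the block-wise construction, combined with the reduction of the first paragraph, yields the desired $\Lambda=\Lambda(f)$ and the $\ell^2$ representation of $f$.
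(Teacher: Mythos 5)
Your opening reduction -- translating the representation into a meromorphic interpolation problem at $\mathbb Z$ via the Fourier coefficients $(-1)^n\sin(\pi\lambda)/(\pi(\lambda-n))$ -- is in the same spirit as the paper's Theorem~\ref{thm22}, but you treat it as essentially automatic, whereas for the span inclusion (as opposed to the explicit $\ell^2$ expansion) the paper has to pass through Cartwright class estimates (Lemma~\ref{lem22}), a de Branges space construction, the Beurling--Malliavin multiplier theorem, and a mixed-completeness result; the $\ell^2$ expansion then comes afterward from Avdonin's theorem. So that first step is thinner than it looks, though the direction is right.

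The real gap is in your local interpolation step, and it is not merely ``a technical anti-concentration input still to be supplied.'' You propose to shave off $\varepsilon L_j$ poles from \emph{every} block $B_j$. The paper does something fundamentally different and much more modest: on the overwhelming majority of integers (the set $U$) it does \emph{not} save any poles at all -- each $s\in U$ gets its own pole, placed at the nearby point $s-K^{-2}(s^2+1)^{-2}$, which makes the local interpolation trivial (the diagonal of the system is huge). Poles are saved only on a \emph{sparse} collection of length-$3$ windows $\Delta_n=\{Tn-1,Tn,Tn+1\}$, namely those $n$ for which the rescaled random triple $(\zeta_{Tn-1}\omega(Tn-1)/\omega(Tn),\zeta_{Tn},\zeta_{Tn+1}\omega(Tn+1)/\omega(Tn))$ happens to land in a fixed small neighborhood of a pre-designed target $LA^*$. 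On those windows one interpolates three values using a two-pole kernel $f_A$ with three real parameters, and the map $A\mapsto(f_A(-1),f_A(0),f_A(1))$ is a local diffeomorphism at $A^*$ -- this is a concrete, checked nondegeneracy, not a parameter count. The probabilistic mechanism that makes this work is not anti-concentration of Cauchy matrices but the law of large numbers: such ``good'' windows occur with positive density, and the law of the iterated logarithm controls the density fluctuations well enough to get the $|t|^{2/3}$ error in~\eqref{lalim}. The cross-block perturbations are then absorbed by a contraction (the iteration~\eqref{stab}), just as you suggest, but that iteration only converges because each local step is either trivial (on $U$) or a uniform local inverse near a fixed point (on $\Delta_n$); re-tuning pole positions per block per iteration, as your scheme would require, is not a contraction in any obvious norm. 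Attempting to save a positive proportion of poles on every block runs into the genuine possibility that for some blocks the Gaussian data simply is not reachable with bounded parameters, and you would need a uniform quantitative control that the paper deliberately avoids needing.

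In short: the outline (interpolation reduction plus block decomposition plus Neumann iteration) matches, but the central idea -- select a sparse set of blocks where the randomness cooperates, and interpolate trivially elsewhere -- is missing, and what you put in its place (uniform pole savings plus anti-concentration) is a different and substantially harder route that you do not carry out.
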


Here $D_{BM}(\Lambda)$ is the Beurling--Malliavin density of $\Lambda$ (see \cite{BM}), equal to $\inf\bigl\{a:\exists f\in L^2[-a,a]:\mathcal Ff|_\lambda=0\bigr\}$, $\mathcal F$ being the Fourier transform.
For more information on the Beurling--Malliavin density and its relations to other densities see, for example, \cite{RED,KRA,K}. 
In particular, it is known (see, for instance, \cite{Kahane}) that $D_{BM}(\Lambda)$ could be infinite while the linear density of $\Lambda$, 
$$
D(\Lambda)=\lim_{R\rightarrow\infty}\frac{\card(\Lambda\cap[-R,R])}{2R},
$$
is zero. However, in Theorem~\ref{t1} we could require that $D_{BM}(\Lambda)=D(\Lambda)<1-\varepsilon$.

Although the Fock space does not permit even a Riesz basis of reproducing kernels, it looks plausible that a corresponding result should be true there as well for suitable densities, see the techniques introduced in \cite{BBB,BB}. 

Below, in Remark~\ref{rem9}, we show that $\varepsilon$ in Theorem \ref{t1} cannot be taken larger than $\frac{1}{2}$. 
It would be 
of interest to get non-trivial 
estimates on possible $\varepsilon$. 
At this moment, we do not have 
such estimates. 
Furthermore, it is easy to find $f\in L^2[-\pi,\pi]$ which do not permit the representation \eqref{mainincl} with $\Lambda=\mathcal Z(V)$ for real entire functions $V$ in the Cartwright class of exponential type less than $\pi$, with simple real zeros. 
In the proof of Theorem \ref{t1}, we use a non-linear meromorphic interpolation procedure. To prove the probability assertion we first verify a deterministic condition and then use it to get an interpolation result.  

\subsection*{Outline of the paper.} In Section~\ref{sec2} we first establish Proposition~\ref{thm1} that gives a representation of random sequences on $\mathbb Z$ as linear combinations of the Cauchy kernels 
with the set of poles of density smaller than $1$ in the real Gaussian case. Its variant for the complex Gaussian case is Proposition~\ref{thm1c}. 
In Section~\ref{sec3} we start with 
Lemma~\ref{lem22} that shows that such a linear combination of the Cauchy kernels is the quotient of two Cartwright functions. 
Theorem~\ref{thm22} shows that under some natural conditions, approximation in the Paley--Wiener space, by the reproducing kernels, corresponding to the zeros of an entire function of exponential type, is equivalent to an interpolation property. Together, these results give Theorem~\ref{thm8} which contains and somewhat extends our Theorem~\ref{t1}. 
We conclude Section~\ref{sec3} by several remarks.

\section{An interpolation result}
\label{sec2}

We say that $\Lambda\subset\mathbb R$ is separated if $\inf\{|\lambda_1-\lambda_2|:
\lambda_1,\lambda_2\in\Lambda,\,\lambda_1\not=\lambda_2\}>0$.  Given a subset $Q$ of $\mathbb R$ we define its counting function $n_Q$ as 
\begin{equation}
n_Q(t)=\begin{cases}
\card (Q\cap[0,t)),\quad t\ge 0,\\
-\card (Q\cap(t,0)),\quad t< 0,
\end{cases}
\label{d02}
\end{equation}

For two positive functions $f,g$ we say that $f$ is
dominated by $g$, denoted by $f\lesssim g$, if there is a constant
$c>0$ such that $f\le cg$.  

We assume that $\omega:\mathbb R_+\to (0,1]$ is a decreasing function such that for some $\delta>0$ we have 
\begin{align}
x\omega(x)&\ge \delta,\qquad x\ge 1,\label{d01} \\
\omega(2x)&\ge \delta\omega(x),\qquad x\ge 0.\label{d17}
\end{align}

Our key technical result shows that a Gaussian sequence can be interpolated by a sum of the Cauchy kernels with the pole set of density less than $1$. 

\begin{proposition} There exists $\varepsilon>0$ 
such that if 
$\zeta=(\zeta_m)_{m\in\mathbb Z}$ is the sequence of independent standard Gaussian real variables 
{\rm(}with mean zero and variance $1${\rm)},  
then, almost surely, 
there exists a function $F$ meromorphic in the complex plane, whose set of poles $Q=(q_k)_{k\in\mathbb Z}$ is a separated subset of the real line, such that 
$$
F(m)=\zeta_m\omega(|m|),\qquad m\in\mathbb Z,
$$
\begin{equation}
|F(z)|\lesssim 1+\frac1{\dist(z,Q)}, \qquad z\in\mathbb C,
\label{d03}
\end{equation}
and
\begin{equation}
|n_Q(t)-(1-\varepsilon)t|\lesssim 1+|t|^{2/3},\qquad t\in\mathbb R,
\label{lalim}
\end{equation}
with the implicit constants in \eqref{d03} and \eqref{lalim} depending on $\zeta$, $\delta$.
\label{thm1}
\end{proposition}

\begin{proof} 
We extend $\omega(x)=\omega(-x)$ for $x\in\mathbb R_-$. 

Given $A=(A_1,A_2,A_3)\in\mathbb R^3$ we set 
$$
f_A(x)=\frac{A_1}{x+A_2}-\frac{A_1}{x+A_3},\qquad x\in\mathbb R,
$$
and define 
\begin{gather*}
\Gamma=\bigl\{ (A_1,A_2,A_3)\in\mathbb R^3:\{A_2,A_3\}\cap\{-1,0,1\}\ne\emptyset\} \bigr\},\\
L:A\in\mathbb R^3\setminus\Gamma \mapsto L(A)=\bigl(f_A(-1),f_A(0),f_A(1)\bigr)\in\mathbb R^3.
\end{gather*}

We equip $\mathbb R^3$ with the maximum norm and define $D(X,r)=\{Y\in\mathbb R^3:\|Y-X\|<r\}$, $X\in\mathbb R^3$. 

Set $A^*=(\frac18,-\frac12,\frac12)$. Then $LA^*=(\frac16,-\frac12,\frac16)$. The Jacobian $J_L$ of $L$ is invertible at $A^*$,
$$
J_L(A^*)=\begin{pmatrix}
\frac{4}{3} &  -\frac{1}{18} & \frac{1}{2} \\
-4 &  -\frac{1}{2} & \frac{1}{2} \\
\frac{4}{3} &  -\frac{1}{2} & \frac{1}{18}
\end{pmatrix},
$$
$\det JL (A^*)=128/81$, and 
$$
J_{L^{-1}}(LA^*)=\begin{pmatrix}
\frac{9}{64} &  -\frac{5}{32} & \frac{9}{64} \\
\frac{9}{16} &  -\frac{3}{8} & -\frac{27}{16} \\
\frac{27}{16} &  \frac{3}{8} & -\frac{9}{16}
\end{pmatrix}.
$$
Next, we can find $\gamma_1,\gamma_2\in(0,1/4)$ such that 
$L|_{D(A^*,\gamma_1)}$ is a diffeomorphism, 
\begin{equation}
D(LA^*,\gamma_2)\subset L\bigl(D(A^*,\gamma_1)\bigr).
\label{lat}
\end{equation}
We will define $L^{-1}$ on $D(LA^*,\gamma_2)$ with values in $D(A^*,\gamma_1)$. 
Furthermore, for sufficiently small $\gamma_1,\gamma_2$ we have 
\begin{align}
\|L^{-1}A-L^{-1}A'\|&\le 3\|A-A'\|,\qquad A,A'\in D(LA^*,\gamma_2),\label{latt}\\
|f_{A}(z)|&\le \frac{2}{|z|^2+1},\qquad |z|\ge 2,\,A\in D(A^*,\gamma_1),\label{lain}\\
|f_{A}(x)-f_{A'}(x)|&\le \frac{8\|A-A'\|}{x^2+1},\qquad |x|\ge 2,\,A,A'\in D(A^*,\gamma_1).\label{lar}
\end{align}

To verify \eqref{latt}, we just use that $\|J_{L^{-1}}(LA^*)\|_{\mathbb R^3\to \mathbb R^3}<3$.

Next, \eqref{lain} follows from the estimate
\begin{multline*}
|f_{A}(z)|\le \frac{|A_1|\cdot|A_2-A_3|}{|z+A_2|\cdot|z+A_3|}\\ \le \Bigl(\frac18+\gamma_1\Bigr)\frac{1+2\gamma_1}{|z-1/2-\gamma_1|^2} \le \frac{2}{|z|^2+1},\qquad |z|\ge 2.
\end{multline*}

Let now $A=(A_1,A_2,A_3),A'=(A'_1,A'_2,A'_3)\in\mathbb R^3$. To verify \eqref{lar}, we use that 
\begin{multline*}
|f_{A}(x)-f_{A'}(x)|=\Bigl| \frac{A_1}{x+A_2}-\frac{A_1}{x+A_3}-\frac{A'_1}{x+A'_2}+\frac{A'_1}{x+A'_3} \Bigr|\\ \le 
\Bigl| \frac{A_1}{x+A_2}-\frac{A_1}{x+A'_2}\Bigr|+\Bigl|  \frac{A_1}{x+A_3}-\frac{A_1}{x+A'_3} \Bigr|
+|A_1-A'_1|\cdot \Bigl| \frac{1}{x+A'_2}-\frac{1}{x+A'_3}\Bigr|\\
=\frac{|A_1|\cdot |A_2-A'_2|}{|x+A_2|\cdot |x+A'_2|}+\frac{|A_1|\cdot |A_3-A'_3|}{|x+A_3|\cdot |x+A'_3|}+
\frac{|A_1-A'_1|\cdot |A'_2-A'_3|}{|x+A'_2|\cdot |x+A'_3|} \\\le 
\Bigl(2\frac{1/8+\gamma_1}{(x-1/2-\gamma_1)^2}+\frac{1+2\gamma_1}{(x-1/2-\gamma_1)^2}\Bigr)\|A-A'\|\\ 
\le \frac{8\|A-A'\|}{x^2+1},\qquad |x|\ge 2.
\end{multline*}

Since
$$
\mathbb E\sum_{m\in\mathbb Z}\frac{|\zeta_m|}{m^2+1}<\infty, 
$$
almost surely we have 
$$
|\zeta_m|\le C_\zeta+ m^2,\qquad m\in\mathbb Z, 
$$
for some (random) $C_\zeta$ depending on $\zeta$. 

Let $T\ge 100$ be a large integer number to be fixed later on and let $K\ge 2C_\zeta$ be a large number to be fixed later on. 
Set 
\begin{multline*}
S=\Bigl\{n\in\mathbb Z: \Bigl(\frac{\zeta_{Tn-1}\omega(Tn-1)}{\omega(Tn)},\zeta_{Tn},\frac{\zeta_{Tn+1}\omega(Tn+1)}{\omega(Tn)}\Bigr)\\
\in D(LA^*,\gamma_2/4)\Bigr\}.
\end{multline*}
Next, set 
\begin{align*}
Y_k&=\Bigl\{n\in[2^k,2^{k+1}-1]:\frac{\omega(n)}{\omega(n+1)}<\frac{k}{k+1}\Bigr\},\qquad k\ge 0,\\
Y&=\cup_{k\ge 0}Y_k.
\end{align*}
Since $\omega$ is decreasing and $\omega(2x)\ge \delta \omega(x)$, we have
$$
\card(Y_k)=O(k),\qquad k\to\infty.
$$

By the Kolmogorov strong law of large numbers (see, for instance, \cite[Section 4.3.2]{shi}), for some absolute constant 
$\varepsilon>0$ we have almost surely
$$
\lim_{R\to\infty}\frac{\card(S\cap(-R,R))}{2R}= \varepsilon.
$$
Furthermore, by the Kolmogorov law of the iterated logarithm (see, for instance, \cite[Theorem 7.1]{pet}), 
applied to the characteristic functions of the events ($n\in S$, $Tn-1,Tn\not\in Y$) we have almost surely
\begin{equation}
\limsup_{R\to\infty}\frac{|\card (S\cap [0,R))-\varepsilon R|+|\card (S\cap(-R,0))-\varepsilon R|}{\sqrt{R\log\log R}}<\infty.
\label{lalim1}
\end{equation}
Next, we set 
\begin{gather*}
\Delta_n=\{Tn-1,Tn,Tn+1\},\\
U=\mathbb Z\setminus\bigcup_{n\in S}\Delta_n.
\end{gather*}
For $T\ge 3$, the sets $\Delta_n$ are disjoint.

To find a meromorphic $F$ (an infinite linear combination of the Cauchy kernels with poles on $\mathbb R$) such that 
$F(m)=\zeta_m\omega(m)$, $m\in\mathbb Z$, we are going to solve a system of (non-linear) equations 
\begin{multline}
\zeta_m\omega(m)=\sum_{s\in U}\frac{\alpha_s\omega(s)/(K(s^2+1))}{m-(s-K^{-2}(s^2+1)^{-2})}\\+
\sum_{n\in S}\omega(Tn)f_{(\alpha_{Tn-1},\alpha_{Tn},\alpha_{Tn+1})}(m-Tn),\qquad m\in\mathbb Z,
\label{delta4}
\end{multline}
for some $\alpha=(\alpha_s)\in\ell^\infty$. 

We obtain $\alpha$ using a fixed-point type iterative argument. To deal with the sum in the right hand side of \eqref{delta4}, we divide it into the local part $W$ and the non-local part $V$ and study these parts separately.

First, we define
$$
\eta_m=\begin{cases}
\dfrac{\zeta_m}{K(m^2+1)},\qquad m\in U,\\
\dfrac{\zeta_m\omega(m)}{\omega(Tn)},\qquad m\in \Delta_n,\, n\in S.
\end{cases}
$$
Then almost surely $\eta=(\eta_m)_{m\in\mathbb Z}\in\ell^\infty$, $\|\eta\|\le 3/4$. Here and later on, $\|x\|=\|x\|_\infty$, $x\in\ell^\infty$. 

Now we introduce 
$$
W_m:a\in\ell^\infty \mapsto 
\begin{cases}
a_m,\qquad m\in U,\\
(L(a_{Tn-1},a_{Tn},a_{Tn+1}))_{m-Tn+2}, \quad m\in \Delta_n,\, n\in S.
\end{cases}
$$
Given $a\in\ell^\infty$ and $m\in U$, we define
\begin{multline*}
V_ma=\frac1{K\omega(m)(m^2+1)}\Bigl[
\sum_{s\in U\setminus\{m\}}\frac{a_s\omega(s)/(K(s^2+1))}{m-(s-K^{-2}(s^2+1)^{-2})}\\
+\sum_{n\in S}\omega(Tn)f_{(a_{Tn-1},a_{Tn},a_{Tn+1})}(m-Tn)
\Bigr],
\end{multline*}
the second series converges because $f_A(x)$ decay quadratically as $|x|\to\infty$. 

Given $a\in\ell^\infty$ and $m\in \Delta_n$, $n\in S$, we define
\begin{multline*}
V_ma=\frac1{\omega(Tn)}\Bigl[
\sum_{s\in U}\frac{a_s\omega(s)/(K(s^2+1))}{m-(s-K^{-2}(s^2+1)^{-2})}\\
+\sum_{y\in S\setminus\{n\}}\omega(Ty)f_{(a_{Ty-1},a_{Ty},a_{Ty+1})}(m-Ty)
\Bigr].
\end{multline*}
Finally, we set
\begin{align*}
W:a\in\ell^\infty &\mapsto (S_ma)_{m\in \mathbb Z},\\
V:a\in\ell^\infty &\mapsto (V_ma)_{m\in \mathbb Z}.
\end{align*}
Given $\gamma>0$, we define
\begin{align*}
E_{1,\gamma}&=\{a\in\ell^\infty:\|a\|\le 1,\,(a_{Tn-1},a_{Tn},a_{Tn+1})\in D(A^*,\gamma), \,n\in S \},\\
E_{2,\gamma}&=\{a\in\ell^\infty:\|a\|\le 1,\,(a_{Tn-1},a_{Tn},a_{Tn+1})\in D(LA^*,\gamma), \,n\in S \}.
\end{align*}
By the definition of $S$, we have $\eta\in E_{2,\gamma_2/4}$.

If $a\in E_{2,\gamma_2}$, then, since $L|_{D(A^*,\gamma_1)}$ is a bijection, and by \eqref{lat}, we can define 
\begin{equation}
W^{-1}a=b\in E_{1,\gamma_1}\text{\ \ such that \ }Wb=a.\label{laa}
\end{equation}
If $m\in U$, then $(W^{-1}x)_m=x_m$. 

\begin{lemma} For 
every $\tau>0$, if $T\ge T(\tau,\delta)$, $K\ge \max(2C_\zeta,K(\tau,\delta))$, then 
\begin{align}
\|W^{-1}x-W^{-1}y\|&\le 3\|x-y\|, \qquad x,y\in E_{2,\gamma_2},\label{last}\\
\|VW^{-1}\eta\|&\le \tau, \label{last2}\\
\|Vx-Vy\|&\le \tau\|x-y\|, \qquad x,y\in E_{1,\gamma_1}.\label{last4} 
\end{align}
\label{ler5}
\end{lemma}

\begin{proof}[Proof of Lemma]
To prove \eqref{last} we use that if $m\in U$, then \newline\noindent $(W^{-1}x-W^{-1}y)_m=x_m-y_m$. Otherwise, if $m\in \Delta_n$, $n\in S$, then we use \eqref{latt}.

Next we pass to the proof of \eqref{last2}. By \eqref{laa}, $\|W^{-1}\eta\|\le 1$ and 
$((W^{-1}\eta)_{Tn-1},(W^{-1}\eta)_{Tn},(W^{-1}\eta)_{Tn+1})\in D(A^*,\gamma_1)$, $n\in\mathbb Z$. If $m\in U$, then by \eqref{lain}
\begin{multline*}
|(VW^{-1}\eta)_m|=|V_m W^{-1}\eta|
\\ \le \frac1{K\omega(m)(m^2+1)}\Bigl[
\sum_{s\in U\setminus\{m\}}\frac{|(W^{-1}\eta)_s|\omega(s)/(K(s^2+1))}{m-(s-K^{-2}(s^2+1)^{-2})}\\
+\sum_{n\in S}\omega(Tn)|f_{((W^{-1}\eta)_{Tn-1},(W^{-1}\eta)_{Tn},(W^{-1}\eta)_{Tn+1})}(m-Tn)|
\Bigr],
\\
\le \frac{2}{K\omega(m)(m^2+1)} \Bigl[
\sum_{s\in U\setminus\{m\}}\frac{\omega(s)}{K(s^2+1)|m-s|}
+\sum_{n\in S}\frac{\omega(Tn)}{(m-Tn)^2+1}
\Bigr].
\end{multline*}
Let us first verify that 
$$
\sum_{s\in U\setminus\{m\}}\frac{\omega(s)}{K^2\omega(m)(m^2+1)(s^2+1)|m-s|}\le \frac{\tau}{4}
$$
for $K\ge K(\tau,\delta)$, $T\ge 3$. Without loss of generality, we can assume that $m\ge 0$.

Now,
\begin{multline*}
\sum_{s\in U\setminus\{m\}}\frac{\omega(s)}{ \omega(m)(m^2+1)(s^2+1)|m-s|}\\ \lesssim
\sum_{0\le s\le m/2}\frac{\omega(s)}{ \omega(m)(m^2+1)(s^2+1)|m-s|}\\+\sum_{m/2< s< m}\frac{\omega(s)}{ \omega(m)(m^2+1)(s^2+1)|m-s|}\\+\sum_{s>m}\frac{\omega(s)}{ \omega(m)(m^2+1)(s^2+1)|m-s|}\\ \lesssim
\sum_{0\le s\le m/2}\frac{1}{ \delta (m^2+1)(s^2+1)}+\sum_{m/2< s< m}\frac{1}{ \delta(m^2+1)^2}\\+\sum_{s>m}\frac{1}{ (m^2+1)(s^2+1)}\le c(\delta).
\end{multline*}

Next, we verify that 
$$
\sum_{n\in S}\frac{\omega(Tn)}{K\omega(m)(m^2+1)((m-Tn)^2+1)}\le \frac{\tau}{4}
$$
for $K\ge K(\tau,\delta)$, $T\ge 3$, $m\ge 0$.

Indeed,
\begin{multline*}
\sum_{n\in S}\frac{\omega(Tn)}{\omega(m)(m^2+1)(m-Tn)^2}\\ \lesssim
\sum_{0\le n\le m/(2T)}\frac{\omega(Tn)}{\omega(m)(m^2+1)(m-Tn)^2}\\
+\sum_{m/(2T)< s< m/T}\frac{\omega(Tn)}{\omega(m)(m^2+1)(m-Tn)^2}\\
+\sum_{n>m/T}\frac{\omega(Tn)}{\omega(m)(m^2+1)(m-Tn)^2}\\ \lesssim
\sum_{0\le n\le m/(2T)}\frac{1}{ \delta (m^2+1)}+\sum_{m/(2T)< s< m/T}\frac{1}{ \delta(m^2+1)}+\sum_{n>m/T}\frac{1}{ m^2+1}\\ \le c(\delta).
\end{multline*}

Summing up, we obtain that 
$|(VW^{-1}\eta)_m|\le \tau$ for $K\ge K(\tau,\delta)$, $T\ge 3$.

If now $m\in \Delta_n$, $n\in S$, then again by \eqref{lain} we have 
\begin{multline*}
|(VW^{-1}\eta)_m|=|V_mW^{-1}\eta|\le 
\frac1{\omega(Tn)}\Bigl[
\sum_{s\in U}\frac{|(W^{-1}\eta)_s|\omega(s)/(K(s^2+1))}{m-(s-K^{-2}(s^2+1)^{-2})}\\
+\sum_{y\in S\setminus\{n\}}\omega(Ty)|f_{((W^{-1}\eta)_{Ty-1},(W^{-1}\eta)_{Ty},(W^{-1}\eta)_{Ty+1})}(m-Ty)|
\Bigr]\\ \le
\frac{2}{\omega(Tn)}\Bigl[
\sum_{s\in U}\frac{\omega(s)}{K(s^2+1)|m-s|}
+\sum_{y\in S\setminus\{n\}}\frac{\omega(Ty)}{(m-Ty)^2+1} 
\Bigr].
\end{multline*}
Let us first verify that 
$$
\sum_{s\in U}\frac{\omega(s)}{K\omega(Tn)(s^2+1)|m-s|}\le \frac{\tau}{4} 
$$
for $K\ge K(\tau,\delta)$, $T\ge 3$, $m\ge 0$.

Indeed,
\begin{multline*}
\sum_{s\in U\setminus\{m\}}\frac{\omega(s)}{ \omega(Tn)(s^2+1)|m-s|} \lesssim
\sum_{0\le s\le m/2}\frac{\omega(s)}{ \omega(Tn)(s^2+1)|m-s|}\\+
\sum_{m/2< s< m}\frac{\omega(s)}{ \omega(Tn)(s^2+1)|m-s|}+
\sum_{s>m}\frac{\omega(s)}{ \omega(Tn)(s^2+1)|m-s|}\\ \lesssim
\sum_{0\le s\le m/2}\frac{1}{ \delta (s^2+1)}+\sum_{m/2< s< m}\frac{1}{\delta  (m^2+1)}+\sum_{s>m}\frac{1}{ m^2+1}\le c(\delta).
\end{multline*}

Next, we verify that 
$$
\sum_{y\in S\setminus\{n\}}\frac{\omega(Ty)}{\omega(Tn)((m-Ty)^2+1)} \le \frac{\tau}{4}
$$
for $T\ge T(\tau,\delta)$, $m\ge 0$.

Indeed,
\begin{multline*}
\sum_{y\in S\setminus\{n\}}\frac{\omega(Ty)}{\omega(Tn)T^2(n-y)^2} \lesssim
\sum_{0\le y\le n/2}\frac{\omega(Ty)}{\omega(Tn)T^2(n-y)^2}\\
+\sum_{n/2< y< n}\frac{\omega(Ty)}{\omega(Tn)T^2(n-y)^2}
+\sum_{y>n}\frac{\omega(Ty)}{\omega(Tn)T^2(n-y)^2}\\ \lesssim
\sum_{0\le y\le n/2}\frac{1}{ \delta Tn}+\sum_{n/2< y< n}\frac{1}{ \delta T^2(n-y)^2}+\sum_{n>m/T}\frac{1}{ T^2(n-y)^2}\le \frac{c(\delta)}T.
\end{multline*}
Summing up, we obtain that 
$|(VW^{-1}\eta)_m|\le \tau$ for $K\ge K(\tau,\delta)$, $T\ge T(\tau,\delta)$. This establishes \eqref{last2}.  

It remains to verify \eqref{last4}. 
If $m\in U$, then by \eqref{lar} we have 
\begin{multline*}
|(Vx)_m-(Vy)_m|=|V_mx-V_my|\le 
\frac{10\|x-y\|}{K\omega(m)(m^2+1)}\\\times \Bigl[
\sum_{s\in U\setminus\{m\}}\frac{\omega(s)/(K(s^2+1))}{|m-s|}+\sum_{n\in S}\frac{\omega(Tn)}{(m-Tn)^2+1}
\Bigr]\le\tau\|x-y\| 
\end{multline*}
when  $K\ge K(\tau,\delta)$, $T\ge 1$.

Finally, if $m\in \Delta_n$, $n\in S$, then again by \eqref{lar} we have 
\begin{multline*}
|(Vx)_m-(Vy)_m|=|V_mx-V_my|\\ \le 
\frac{10\|x-y\|}{\omega(Tn)}\Bigl[
\sum_{s\in U}\frac{\omega(s)/(K(s^2+1))}{|m-s|}
+\sum_{y\in S\setminus\{n\}}\frac{\omega(Ty)}{(m-Ty)^2+1} 
\Bigr]\\ \le\tau\|x-y\|
\end{multline*}
when  $K\ge K(\tau,\delta)$, $T\ge T(\tau,\delta)$.
This establishes \eqref{last4}.  
\end{proof}

Let us return to the proof of the proposition. 
Given $\tau\in(0,1/4)$, from now on we assume that $K\ge \max(2C_\zeta,K(\tau,\delta))$, $T\ge T(\tau,\delta)$ so that the assertions of Lemma~\ref{ler5} are satisfied. 
 
We set 
\begin{equation}
\label{stab}\begin{cases}
\alpha(0)= W^{-1}\eta,\\
\alpha(j+1)= W^{-1}(\eta-V\alpha(j)),\qquad j\ge 0.
\end{cases}
\end{equation}

We have $\eta\in E_{2,\gamma_2/4}$, $\|\eta\|\le 3/4$. By \eqref{laa}, $\alpha(0)\in E_{1,\gamma_1}$. Next, $\|\alpha(0)\|\le 3/4$.   
By \eqref{last2}, $\|V\alpha(0)\|\le \tau$. 
If $\tau<\gamma_2/4$, then 
\begin{equation}
\eta-V\alpha(0)\in E_{2,\gamma_2/2}. \label{ladop}
\end{equation}
By \eqref{last},
$$
\|\alpha(1)-\alpha(0)\|\le 2\|V\alpha(0)\|\le 2\tau. 
$$
If $\tau<\gamma_1/4$, then 
$$
\|\alpha(1)-\alpha(0)\|\le \gamma_1/2.
$$
Hence, $\alpha(1)\in E_{1,\gamma_1}$.

Next, by \eqref{last4}, 
\begin{multline*}
\|V\alpha(1)\|\le \|V\alpha(1)-V\alpha(0)\|+\|V\alpha(0)\|\\ \le \tau \|\alpha(1)-\alpha(0)\|+\|V\alpha(0)\|\le 2\tau.
\end{multline*}
If $\tau<\gamma_2/4$, then $\eta-V\alpha(1)\in E_{2,\gamma_2}$. 

Now, for $j\ge 1$, suppose that 
$$
\|\alpha(s)-\alpha(s-1)\|\le \gamma_12^{-s},\qquad 1\le s\le j,
$$
and $\eta-V\alpha(j-1), \eta-V\alpha(j)\in E_{2,\gamma_2}$, $\alpha(j-1),\alpha(j)\in E_{1,\gamma_1}$. Then, by \eqref{laa},  
$\alpha(j+1)=W^{-1}(\eta-V\alpha(j))\in E_{1,\gamma_1}$.
By \eqref{last} and \eqref{last4}, 
\begin{multline*}
\|\alpha(j+1)-\alpha(j)\|=\|W^{-1}(\eta-V\alpha(j))-W^{-1}(\eta-V\alpha(j-1))  \| \\
\le 2\|V\alpha(j)-V\alpha(j-1)\|\le 2\tau \|\alpha(j)-\alpha(j-1)\| \\ 
\le2\tau \gamma_12^{-j}\le \gamma_12^{-(j+1)}.
\end{multline*}
Again by \eqref{last4}, we have 
$$
\|V\alpha(j+1)-V\alpha(0))\|\le  \tau \|\alpha(j+1)-\alpha(0)\|\le \tau\gamma_1. 
$$
If $\tau\gamma_1<\gamma_2/2$, then, by \eqref{ladop}, $\eta-V\alpha(j+1)\in E_{2,\gamma_2}$.
This completes the induction step under the condition that
$$
\tau<\min(\gamma_1/4,\gamma_2/4).
$$
Fix such $\tau$ and fix some $K\ge \max(2C_\zeta,K(\tau,\delta))$, $T\ge T(\tau,\delta)$. Then our process \eqref{stab} gives a sequence $(\alpha(j))_{j\ge 0}$ 
that converges in $\ell^\infty$ to a point $\alpha\in E_{1,\gamma_1}$.
By \eqref{last} and \eqref{last4} we obtain that $\alpha=W^{-1}(\eta-V\alpha)$, and hence, 
$$
W\alpha+V\alpha=\eta.
$$ 

We set
\begin{multline*}
F(z)=\sum_{s\in U}\frac{\alpha_s\omega(s)/(K(s^2+1))}{z-(s-K^{-2}(s^2+1)^{-2})}\\
+\sum_{y\in S}\omega(Ty)f_{(\alpha_{Ty-1},\alpha_{Ty},\alpha_{Ty+1})}(z-Ty).
\end{multline*}
The two series converge and determine the meromorphic function $F$ with simples poles at the points $s-K^{-2}(s^2+1)^{-2}$, $s\in U$, and at the 
points $Ty-\alpha_{Ty}$, $Ty-\alpha_{Ty+1}$, $y\in S$. 
Therefore, by \eqref{lalim1}, condition \eqref{lalim} is satisfied with $\varepsilon=\varepsilon(\delta)>0$. 
Property \eqref{d03} follows by construction. 

If $m\in U$, then 
\begin{multline*}
F(m)=K(m^2+1)\omega(m)\alpha_m+\sum_{s\in U\setminus \{m\}}\frac{\alpha_s\omega(s)/(K(s^2+1))}{m-(s-K^{-2}(s^2+1)^{-2})}\\
+\sum_{y\in S}\omega(Ty)f_{(\alpha_{Ty-1},\alpha_{Ty},\alpha_{Ty+1})}(m-Ty)\\=
K(m^2+1)\omega(m)((W\alpha)_m+(V\alpha)_m)=K(m^2+1)\omega(m)\eta_m=\zeta_m\omega(m).
\end{multline*}
If now $m\in \Delta_n$, $n\in S$, then 
\begin{multline*}
F(m)=\omega(Tn)f_{(\alpha_{Tn-1},\alpha_{Tn},\alpha_{Tn+1})}(m-Tn)\\+
\sum_{s\in U}\frac{\alpha_s\omega(s)/(K(s^2+1))}{m-(s-K^{-2}(s^2+1)^{-2})}
+\sum_{y\in S\setminus\{n\}}\omega(Ty)f_{(\alpha_{Ty-1},\alpha_{Ty},\alpha_{Ty+1})}(m-Ty)\\=\omega(Tn)L((\alpha_{Tn-1},\alpha_{Tn},\alpha_{Tn+1}))_{m-Tn+2}+
\omega(Tn)V_m\alpha\\=
\omega(Tn)((W\alpha)_m+(V\alpha)_m)=\omega(Tn)\eta_m=\zeta_m\omega(m).
\end{multline*}
Thus, $F(m)=\zeta_m\omega(m)$, $m\in\mathbb Z$, and the proof is completed.
\end{proof}

The same result holds if $(\zeta_m)$, $m\in\mathbb Z$, are independent standard Gaussian complex variables. 

\begin{proposition} There exists $\varepsilon>0$ 
such that if $(\zeta_m)_{m\in\mathbb Z}$ is the sequence of independent standard Gaussian complex variables, then, almost surely,
there exists a meromorphic function $F$, whose set of poles $Q=(q_k)_{k\in\mathbb Z}$ is a separated subset of the real line, such that 
$$
F(m)=\zeta_m\omega(|m|),\qquad m\in\mathbb Z,
$$
$$
|F(z)|\lesssim 1+\frac1{\dist(z,Q)}, \qquad z\in\mathbb C,
$$
and
$$
|n_Q(t)-(1-\varepsilon)t|\lesssim 1+|t|^{2/3},\qquad t\in\mathbb R.
$$
\label{thm1c}
\end{proposition}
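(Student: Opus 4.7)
The plan is to follow the proof of Proposition~\ref{thm1} almost verbatim, upgrading only the interpolation family $f_A$ to one that reaches complex-valued tuples while keeping the pole-to-block-length ratio strictly below one. A convenient choice is to use blocks of length $5$ and
\[
f_A(x) = \sum_{j=1}^{4} \frac{\alpha_j}{x - p_j}, \qquad \alpha_j \in \mathbb C,\ p_j \in \mathbb R,\ \sum_{j=1}^{4}\alpha_j = 0,
\]
parameterized by $A = (\alpha_1,\alpha_2,\alpha_3,p_1,p_2,p_3,p_4) \in \mathbb R^{10}$ with $\alpha_4 := -\alpha_1-\alpha_2-\alpha_3$. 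The evaluation map $L: A \mapsto \bigl(f_A(j)\bigr)_{j=-2}^{2} \in \mathbb C^5 \cong \mathbb R^{10}$ then acts between spaces of equal real dimension, and the residue-sum-zero condition preserves the $O(1/x^2)$ decay that makes the series in the operator $V$ converge.

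First I would pick a base point $A^* \in \mathbb R^{10}$ with the $p_j^*$ distinct and the $\alpha_j^*$ in generic complex position; I would stay clear of the degenerate subvarieties where, for example, all $\alpha_j^*$ are real or all are purely imaginary, since there $JL(A^*)$ is singular for dimension reasons. A direct determinant computation then establishes invertibility of $JL(A^*)$, and the inverse function theorem produces a Lipschitz local inverse of $L$ together with analogues of \eqref{lat}--\eqref{lar}. I expect this Jacobian check to be the main technical obstacle.

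With these ingredients in place the rest of the argument runs in parallel with the real case: set $\Delta_n = \{Tn-2,Tn-1,Tn,Tn+1,Tn+2\}$ and
\[
S = \Bigl\{n \in \mathbb Z : \bigl(\tfrac{\zeta_{Tn+j}\omega(|Tn+j|)}{\omega(Tn)}\bigr)_{j=-2}^{2} \in D(LA^*, \gamma_2/4)\Bigr\}.
\]
The complex Gaussian $5$-tuple has a non-degenerate absolutely continuous law on $\mathbb C^5$, so the LLN and LIL yield that $S$ has density $\varepsilon = \varepsilon(\delta) > 0$ almost surely together with \eqref{lalim1}. Define $U$, $\eta$ and the operators $S$, $V$ as before; the analogue of Lemma~\ref{ler5} and the contraction iteration \eqref{stab} transfer verbatim, producing $\alpha \in \ell^\infty$ with $S\alpha + V\alpha = \eta$. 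The meromorphic function $F$ is built as one Cauchy kernel per $s \in U$ plus the four-pole term $\omega(Tn) f_{A_n}(\,\cdot\,-Tn)$ for each $n \in S$. The density count gives $(1-5\varepsilon)+4\varepsilon = 1-\varepsilon < 1$, hence \eqref{lalim}, and \eqref{d03} together with the interpolation property $F(m) = \zeta_m \omega(|m|)$ follow by the same calculation as in the real case.
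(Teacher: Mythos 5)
Your proposal matches what the paper intends: Proposition~\ref{thm1c} is dispatched in the paper by a single remark --- ``replace the linear combination of two Cauchy kernels $f_A$ by that of four Cauchy kernels'' --- and your four-pole, five-point-block version, with residues summing to zero to preserve quadratic decay and a real-dimension count $10=10$, is the natural reading that keeps the pole density strictly below one. Your observations about degenerate base points (all residues real, or all purely imaginary, forcing the image of $JL(A^*)$ into a real subspace of dimension at most $8$) are correct, and the one step you leave open --- actually exhibiting an $A^*$ with $\det JL(A^*)\ne 0$ --- is exactly what the paper also leaves implicit for this proposition.
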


The only change in the proof with respect to that of Proposition~\ref{thm1} is that we replace the linear combination $f_A$ of two Cauchy kernels by that of four Cauchy kernels.


\section{Approximation results}
\label{sec3}

We use here some standard notation. Given an entire function $f$ of exponential type, we denote by $\mathcal Z(f)$ its zero set and by $\Type(f)$ its exponential type. 

Denote the Cartwright class by $\Cart$ and the Paley--Wiener space by $\Pw$. 
Let ${\bf k}_\lambda$ be the reproducing kernel of $\Pw$ at $\lambda\in\mathbb C$. 
The standard Fourier transform $\mathcal F$ maps $L^2[-\pi,\pi]$ onto $\Pw$ and the exponentials transform into the reproducing kernels. Therefore, \eqref{mainincl} is equivalent to 
$$
\Pw\ni \mathcal F f\in\Spa_{\lambda\in\Lambda}{\bf k}_\lambda.
$$

The following two results are probably known to experts. For the sake of completeness, we give their proofs here. 

\begin{lemma}
Let $F$ be a meromorphic function, whose set of poles $Q=(q_k)_{k\in\mathbb Z}$ is a separated subset of the real line, such that
\begin{equation}
|F(z)|\lesssim 1+\frac1{\dist(z,Q)}, \qquad z\in\mathbb C,
\label{d06}
\end{equation}
and let for some $C>0$ we have 
\begin{equation}
|n_Q(t)-Ct|\lesssim 1+|t|^{2/3},\qquad t\in\mathbb R,
\label{d07}
\end{equation}
where $n_Q$ is given by \eqref{d02}.

If $V$ is the canonical product constructed by $Q$, then $F=U/V$, 
where $U,V\in\Cart$ and $\Type (U)\le \Type (V)=C\pi$.
\label{lem22}
\end{lemma}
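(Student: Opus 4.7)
The plan is to take $V$ to be the canonical genus-one product over the pole set $Q$ and to set $U:=FV$; the claimed properties then follow by comparing $U$ to $V$ off the poles and by exploiting the cancellation of the simple poles of $F$ against the simple zeros of $V$. Concretely, the density hypothesis \eqref{d07} gives $|n_Q(t)-Ct|=o(|t|)$, which is more than enough for convergence of $\sum_{|q_k|\le R}1/q_k$ in principal value (integration by parts, using $r(t)/t^2\in L^1(|t|\ge 1)$ for $r(t)=n_Q(t)-Ct$) and hence for the canonical product $V(z)=\prod_k(1-z/q_k)\exp(z/q_k)$ to be well defined. Standard facts about canonical products with real, separated zeros of linear density $C$ then give $V\in\Cart$ with $\Type V=C\pi$ (the Phragm\'en--Lindel\"of indicator in the imaginary direction equals $C\pi$ by Lindel\"of's theorem, and separation of $Q$ yields the Cartwright integrability of $\log^+|V(x)|/(1+x^2)$).

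Since \eqref{d06} forces each pole of $F$ to be simple, and each $q_k$ is a simple zero of $V$ by construction, $U:=FV$ is entire. To bound $\Type U$ I would split by distance to $Q$: for $\dist(z,Q)\ge 1$ the hypothesis gives $|F(z)|\lesssim 1$ and hence $|U(z)|\lesssim |V(z)|$. For $\dist(z,Q)<1$, let $q_k$ be the closest pole and factor $V(z)=(z-q_k)\widetilde V_k(z)$, where $\widetilde V_k$ is entire of the same type $C\pi$. Then
$$
U(z)=\bigl[(z-q_k)F(z)\bigr]\,\widetilde V_k(z),
$$
the bracketed factor is uniformly bounded in a fixed neighborhood of $q_k$ by \eqref{d06} together with the separation of $Q$, and for $|z-q_k|<1$ the maximum principle on the disk $|w-q_k|=1$ controls $|\widetilde V_k(z)|$ by the values of $|V(w)|$ on that circle. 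Combined, these estimates give $\Type U\le C\pi$.

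Finally, Cartwright integrability of $U$ reduces, via $\log^+|U(x)|\le\log^+|V(x)|+\log^+|F(x)|$, to the already established integrability of $\log^+|V(x)|/(1+x^2)$ and to $\int_{\mathbb R}(1+x^2)^{-1}\log^+(1/\dist(x,Q))\,dx<\infty$. The latter is immediate from separation of $Q$: each $q_k$ contributes only $O((1+q_k^2)^{-1})$ to the integral, and these are summable since $n_Q(t)\lesssim|t|$. Together with Step two, this places $U$ in $\Cart$ with $\Type U\le\Type V=C\pi$ and $F=U/V$, completing the proof.

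The main technical obstacle, as I see it, is the clean derivation of $\Type U\le C\pi$ in a neighborhood of the real axis: the naive pointwise bound $|U(x)|\lesssim(1+1/\dist(x,Q))|V(x)|$ blows up at each $q_k$, so one must either perform the local factorization $V=(z-q_k)\widetilde V_k$ as above or invoke a Phragm\'en--Lindel\"of argument in a narrow strip $|\Im z|\le 1$ where the boundary estimate is already provided by the off-axis bound. Either route works, but each requires some care to avoid constants growing with $k$.
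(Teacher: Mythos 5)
The central gap is in the first step. You assert that ``separation of $Q$ yields the Cartwright integrability of $\log^+|V(x)|/(1+x^2)$,'' and you use the quantitative hypothesis \eqref{d07} only to get $n_Q(t)-Ct=o(|t|)$ for convergence purposes. But separation of the zeros plus existence of the linear density $C$ is \emph{not} sufficient for the canonical product to lie in the Cartwright class, and this is exactly where the $O(|t|^{2/3})$ bound is doing real work. A counterexample: take $Q=\mathbb Z\setminus\bigcup_{k\ge 2}\pm\{a_k,\dots,a_k+L_k\}$ with $a_k=2^k$, $L_k=\lfloor 2^k/\sqrt k\rfloor$. This $Q$ is separated, symmetric, and $n_Q(t)=t+O(t/\sqrt{\log t})=t(1+o(1))$, so the density is $C=1$ and $V$ has exponential type $\pi$. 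However, writing $V(x)=\frac{\sin\pi x}{\pi x}\prod_{n\in\mathbb Z\setminus Q,\,n>0}(1-x^2/n^2)^{-1}$, one finds $\log^+|V(x)|\gtrsim L_k\log(a_k/L_k)\gtrsim L_k$ for $x$ in the middle of the $k$-th removed block, and the contribution of that block to $\int\log^+|V(x)|/(1+x^2)\,dx$ is $\gtrsim L_k^2/a_k^2=1/k$, whose sum diverges. So $V\notin\Cart$ for this $Q$. Your argument would ``prove'' $V\in\Cart$ here, so it has a genuine hole: you have discarded the quantitative content of \eqref{d07}, and with it the Cartwright property.

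The paper's route is designed precisely to exploit \eqref{d07}. It writes $\log|V(z)|=C\pi|\Im z|+\Re\int\log(1-z/t)\,d\rho(t)$ with $\rho(t)=n_Q(t)-Ct$, uses $|\rho(t)|\lesssim 1+|t|^{2/3}$ to show the second term is $\ge -M(1+|z|^{3/5})/|\Im z|$, and then invokes Matsaev's theorem to conclude $V\in\Cart$ (and $\Type V=C\pi$). You would need something like this --- or at minimum a Poisson/Jensen-type argument that actually uses the $O(|t|^{2/3})$ regularity --- to close the gap. Your treatment of $U=FV$ is essentially fine but more elaborate than necessary: away from a fixed neighborhood of $\mathbb R$ one has $\dist(z,Q)\ge\text{const}$, hence $|U(z)|\lesssim|V(z)|$ there, and once $V\in\Cart$ and $U\in\Cart$ are known, $\Type U\le\Type V$ follows from comparing the indicators at $\pm i$; the local factorization near each $q_k$ and the maximum-principle step are not needed. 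The Cartwright integrability of $\log^+|U|$ via $\log^+|F(x)|\lesssim\log^+(1/\dist(x,Q))$ and separation of $Q$ is correct and matches the paper.
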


\begin{proof}
Without loss of generality, $0\not\in Q$. Set $\rho(t)=n_Q(t)-Ct$. 
Since $n_Q(t)=O(|t|)$, $|t|\to\infty$, the infinite product $\prod_{q\in Q}(1-z/q)\exp(-z/q)$ converges in the whole complex plane. 
By condition \eqref{d07}, we can define
$$
V(z)=\lim_{R\to\infty}\prod_{q\in Q,\,|q|<R}\Bigl(1-\frac{z}{q}\Bigr),
$$ 
with the products converging uniformly on compacts. Furthermore, we have 
\begin{multline*}
\log|V(z)|=\Re \lim_{R\to\infty}\int_{-R}^{R}\log\Bigl(1-\frac{z}t\Bigr)\,dn_Q(t)\\=C\Re \int_0^\infty\log\Bigl(1-\frac{z^2}{t^2}\Bigr)\,dt+\Re \int_{-\infty}^{\infty}\log\Bigl(1-\frac{z}t\Bigr)\,d\rho(t)\\=
C\pi|\Im z|+\Re \int_{-\infty}^{\infty}\log\Bigl(1-\frac{z}t\Bigr)\,d\rho(t).
\end{multline*}
Next, 
$$
\Re \int_{-\infty}^{\infty}\log\Bigl(1-\frac{z}t\Bigr)\,d\rho(t)=\Re \int_{-\infty}^{\infty}\frac{z\rho(t)}{t(z-t)}\,dt,
$$
and 
$$
 \int_{-\infty}^{\infty}\frac{|z|}{|z-t|}\cdot \frac{|\rho(t)|}{|t|}\,dt=\int_{|t|<|z|/2}+\int_{|t|>2|z|}+\int_{|z|/2\le |t|\le 2|z|}=I_1+I_2+I_3.
$$
By \eqref{d07} we have 
$$
I_1+I_2\lesssim 1+|z|^{2/3}.
$$
Furthermore,
$$
I_3\lesssim (1+|z|^{2/3})\int_{|z|/2\le |t|\le 2|z|}\frac{dt}{|z-t|}\lesssim \frac{1+|z|^{4/5}}{|\Im z|}.
$$
Thus, for some $M<\infty$, 
$$
\log|V(z)|\ge -M\frac{1+|z|^{4/5}}{|\Im z|}, 
$$
and by Matsaev's theorem \cite[Section 26.4]{L}, we have $V\in\Cart$.

Since $F$ has only simple zeros at the points of $Q$, and $V$ vanishes there, $U=VF$ is an entire function. 
By \eqref{d06}, $U$ is of at most exponential growth outside small disks around points in $Q$, and by the maximum principle, 
$U$ is of exponential type. Since $V\in\Cart$, $\int_{\mathbb R}\frac{\log^+|V(x+i)|}{1+x^2}\,dx<\infty$ and, again by \eqref{d06}, 
we have $\int_{\mathbb R}\frac{\log^+|U(x+i)|}{1+x^2}\,dx<\infty$.
Hence, $U\in\Cart$.
\end{proof}

A disjoint sequence of intervals $(I_n)$ on the real line is said to be a long sequence of intervals if $|I_n|\to\infty$ and 
\begin{equation}
\sum_n \frac{|I_n|^2}{1 + {\rm dist}^2(0,I_n)} = \infty.
\label{d147}
\end{equation}
Given $\Lambda\subset \mathbb R$, by definition, its Beurling--Malliavin density $D_{BM}(\Lambda)$ is the supremum of $d$ such that there exists a long sequence of intervals $(I_n)$ 
satisfying the relation $\card(\Lambda\cap I_n)>d|I_n|$.

\begin{lemma} In the conditions of Lemma~\ref{lem22}, we have 
$$
D(Q)=D_{BM}(Q)=C.
$$
\label{lem28}
\end{lemma}

\begin{proof} If $I_n=(2^n,2^{n+1})$, $n\ge 1$, then $(I_n)$ is a long sequence of intervals and by \eqref{d07}, 
$\card(Q\cap I_n)\ge C|I_n|+O(|I_n|^{3/2})$, hence, $D_{BM}(Q)\ge C$.

Next, if $d>C$ and if $(I_n)=(a_n,b_n)$ is a long sequence of intervals such that $\card(Q\cap I_n)>d|I_n|$, then 
$$
d(b_n-a_n)<n_Q(b_n)-n_Q(a_n)<Cb_n-Ca_n+O(b_n^{3/2}),
$$
and
$$
|I_n|=O(|I_n|^{2/3}+{\rm dist}^2(0,I_n)^{2/3})
$$
which contradicts to \eqref{d147}. 

The equality $D(Q)=C$ is immediate. 
\end{proof}

Next we show that approximation by a fixed family of reproducing kernels  is equivalent to interpolation by meromorphic functions on $\mathbb Z$. 

\begin{theorem}
Let $F\in\Pw$, let $V\in\Cart$ be a real entire function with real simple zeros and with $\Type (V)<\pi$, and let $\mathcal Z(V)\cap\mathbb Z=\emptyset$.  
Then 
\begin{equation}
F\in\Spa_{\lambda\in\mathcal Z(V)}{\bf k}_\lambda
\label{d09}
\end{equation}
if and only if there exists $U\in\Cart$ such that $\Type(U)\le \Type(V)$ and 
\begin{equation}
F(n)=(-1)^n\frac{U(n)}{V(n)},\qquad n\in\mathbb Z.
\label{d10}
\end{equation}
\label{thm22}
\end{theorem}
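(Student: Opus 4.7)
The plan is to prove both implications by exploiting the decomposition
\begin{equation*}
{\bf k}_\lambda(z)=\frac{\sin\pi z\cos\pi\lambda-\cos\pi z\sin\pi\lambda}{\pi(z-\lambda)},
\end{equation*}
which converts a combination of reproducing kernels at $\Lambda=\mathcal Z(V)$ into a pair of meromorphic functions whose poles lie on $\Lambda$.

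For the direction \eqref{d09}$\Rightarrow$\eqref{d10}, I first analyze a finite combination $F_0=\sum_{\lambda\in\Lambda_0}a_\lambda{\bf k}_\lambda$ with $\Lambda_0\subset\mathcal Z(V)$ finite. The identity above rewrites $F_0=(\sin\pi z\cdot P-\cos\pi z\cdot Q)/\pi$ with rational $P,Q$ whose poles lie in $\Lambda_0$, so $VP$ and $VQ$ are entire of type at most $\Type V$. Evaluation at $n\in\mathbb Z$ kills the $\sin\pi z$ term and yields $F_0(n)=(-1)^nU_0(n)/V(n)$ with $U_0:=-VQ/\pi$. For $F$ merely in the closure, I approximate by such $F_N$'s; the associated $U_N$'s then satisfy $U_N(n)\to(-1)^nF(n)V(n)$ at every integer. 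The uniform type bound $\Type U_N\le\Type V$ together with a Phragm\'en--Lindel\"of bound on the imaginary axis deduced from $F_N\to F$ in $\Pw$ permits a normal families argument producing a subsequential locally uniform limit $U$ of type at most $\Type V$. Cartwright-class membership for $U$ then follows from Matsaev's theorem exactly as in Lemma \ref{lem22}.

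For the reverse direction \eqref{d10}$\Rightarrow$\eqref{d09}, I introduce $G(z):=F(z)V(z)-\cos(\pi z)U(z)$. This is entire, and the hypothesis forces $G(n)=0$ at every integer, so $G=\sin(\pi z)H(z)$ for an entire $H$. An imaginary-axis estimate gives $\Type H\le\Type V$, and log-integrability on $\mathbb R$ (from $F\in\Pw$, $U,V\in\Cart$, and the integrability of $\log|\sin\pi x|^{-1}$ near integers) yields $H\in\Cart$. Evaluating $G=\sin(\pi z)H$ at $\lambda\in\mathcal Z(V)$ gives $H(\lambda)=-\cot(\pi\lambda)U(\lambda)$. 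Dividing the identity $FV=\cos\pi z\cdot U+\sin\pi z\cdot H$ by $V$ and collecting residues at each pole via the algebraic relation
\begin{equation*}
\cos\pi z\cdot U(\lambda)+\sin\pi z\cdot H(\lambda)=-\frac{U(\lambda)}{\sin\pi\lambda}\,\sin\pi(z-\lambda)
\end{equation*}
recovers the series $F=\sum_\lambda a_\lambda{\bf k}_\lambda$ with $a_\lambda:=-\pi U(\lambda)/(V'(\lambda)\sin\pi\lambda)$.

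The main obstacle in both directions is the simultaneous justification of Mittag--Leffler/partial-fraction convergence in the $\Pw$-norm and of the Cartwright-class bound for the auxiliary entire function ($H$ in one direction, the limit $U$ in the other). Throughout, the essential positive parameter is the density gap $1-\Type V/\pi>0$: it drives the decay estimates needed for $\Pw$-norm convergence and for the Matsaev-type regularity estimates. A technical subtlety I expect to have to handle carefully is the case when $V$ has multiple zeros on $\Lambda$, which replaces simple-pole partial fractions by Hermite-style residue data but leaves the overall structure of the argument intact.
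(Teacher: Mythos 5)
The proposal correctly identifies the algebraic skeleton (the $\sin$/$\cos$ decomposition of $\mathbf{k}_\lambda$, the cancellation of the $\sin\pi z$ term at integers, and the auxiliary entire function $H$ in the reverse direction), but it leaves the two analytical steps that actually carry the theorem unproved, and neither can be pushed through along the lines you sketch.

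In the forward direction, the normal-families step for the $U_N$ is not justified and, as stated, is false in general. Writing $F_N=\bigl(\sin\pi z\cdot P_N-\cos\pi z\cdot Q_N\bigr)/\pi$ and $U_N=-VQ_N/\pi$, you need local uniform bounds on $Q_N$. But $\Pw$-convergence of $F_N$ only controls the combination $\sin\pi z\cdot P_N-\cos\pi z\cdot Q_N$; on the imaginary axis both $\sin\pi iy$ and $\cos\pi iy$ have the same exponential size, so cancellation between the two terms prevents you from extracting a bound on $Q_N$ alone, and the coefficients $a_\lambda^{(N)}$ need not be bounded uniformly in $N$ (the kernels $\mathbf{k}_\lambda$, $\lambda\in\mathcal Z(V)$, are not a Riesz sequence here, since $\Type V<\pi$). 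Without a compactness statement for the $U_N$, you get at best pointwise convergence of $U_N$ on $\mathbb Z$, which does not produce an entire function $U$. The paper sidesteps this entirely by dualizing \eqref{d09} via Hahn--Banach to \eqref{d11}, then passing to the de Branges space $\mathcal H_V$ and an orthogonal basis of normalized kernels there; the entire function $U$ arises from the interpolation identity
\begin{equation*}
\sum_{k\in\mathbb Z}\frac{F(k)A(k)V(k)}{k-z}=\frac{\pi A(z)U(z)}{\sin\pi z},
\end{equation*}
whose left side carries the growth and log-integrability estimates needed to put $U$ in $\Cart$. This requires the Beurling--Malliavin multiplier theorem to know $\mathcal H_V$ is nontrivial, an ingredient absent from your argument.

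In the reverse direction you reduce \eqref{d09} to convergence in $\Pw$ of the residue series $\sum_\lambda a_\lambda\mathbf{k}_\lambda$, and you explicitly flag this as the remaining obstacle. That is not a technicality: convergence of that partial fraction expansion in $\Pw$-norm is not implied by $U,H\in\Cart$ and is essentially as hard as \eqref{d09} itself. In fact \eqref{d09} asks only that $F$ lie in the \emph{closed} span, not that it admit a convergent kernel expansion, so the residue-series strategy both overclaims and leaves the substance unproved. The paper again argues by duality: it uses the Beurling--Malliavin multiplier applied to $U^2+V^2+1$ to produce $W\in\Cart$ with $UW,VW\in\Pw$ and $\Type(VW)=\pi$, invokes \cite[Prop.~2.1]{ABB} for completeness of the mixed system $(\mathbf{k}_\lambda)_{\lambda\in\mathcal Z(V)}\cup(VW/(\cdot-\mu))_{\mu\in\mathcal Z(W)}$, and then checks the needed orthogonality via the Shannon sampling identity. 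That completeness theorem is the key structural input your outline has no substitute for. The aside about multiple zeros of $V$ is a distraction; the real gaps are the two missing analytic steps above.
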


\begin{proof} Suppose that $F$ satisfies \eqref{d10}. 

Here we use the Beurling--Malliavin multiplier theorem \cite[Section X.A]{K} in the following form:

{\it for every $f\in\Cart$ and for every $\varepsilon>0$ there exists an entire function $\varphi\not=0$ of exponential type at most $\varepsilon$ such that $\varphi f\in L^{\infty}(\mathbb R)$.} 

Let us recall that there is some freedom in the choice of $\varphi$. In particular, if we shift its zeros, say, by exponentially small distances, then the assertion will still hold.

Given an entire function $f$, we define $f^*(z)=\overline{f(\overline{z}})$.

We apply the Beurling--Malliavin multiplier theorem to $f=UU^*+V^2+1$ with $\varepsilon<\pi-\Type (V)$, 
obtain $\varphi$ and multiply it by $\sin (\varkappa z)/z$ with $\varkappa=\pi-\Type (V)-\varepsilon$ 
to get a real entire function $W\in\Cart$ such that $\mathcal Z(W)\cap \mathcal Z(V)=\emptyset$ and 
$$
UW,VW\in\Pw,\quad \Type(VW)=\pi.
$$

Now, we are going to use (a version of) a powerful result from \cite{ABB}. 
The proof there is sufficiently technically involved and includes several elements used in similar situations in \cite{BBB2012}, \cite[Section 4.2]{BB2023}. Here we need a slight modification of this result and just indicate necessary changes in the argument. 
Given $a<b$ we define 
${\tt PW_{[a,b]}}=\mathcal F(L^2[a,b])$, where $\mathcal F$ is the Fourier transform, $\mathcal Ff(z)=\int f(t)e^{itz}\,dt$. 
The following assertion is proved in \cite[Proposition 2.1]{ABB}:

{\it Let $G\in\Pw$ be such that
$$
\text{for every proper subinterval $[a,b]$ of $[-\pi,\pi]$,\ } G\not\in {\tt PW_{[a,b]}}.
$$
Suppose that $\mathcal Z(G)=\Lambda_1 \sqcup \Lambda_2$, $\Lambda_2\subset\mathbb R$, and suppose that 
\begin{equation}
\mathcal D_+(\Lambda_2)=\lim_{R\to\infty}\sup_{x\in\mathbb R} \frac{\card(\Lambda_2\cap[x,x+R])}{R}<1.
\label{fr12}
\end{equation}
Then the mixed system
$$
\bigl({\bf k}_\lambda\bigr)_{\lambda\in\Lambda_2}\cup \Bigl(\frac{VW}{\cdot-\mu}\Bigr)_{\mu\in \Lambda_1} 
$$
is complete in $\Pw$. }

We are going to use a variant of this assertion with \eqref{fr12} replaced by the condition that $\Lambda_2=\mathcal Z(V)$ for some 
real entire function $V\in\Cart$ with simple real zeros such that $\Type (V)<\pi$. As in the proof of \cite[Proposition 2.1]{ABB} 
we arrive at the formula (3.7) there:
\begin{equation}
n_{\mathcal Z(V)}(x)+n_\Sigma(x)=   x +\widetilde {u}(x)+v(x) +\alpha(x),\qquad x\in\mathbb R,
\label{BM}
\end{equation}
where $\Sigma\subset\mathbb R$ is a union of two sequences, each of them separated, $u\in L^1(dx/(1+x^2))$, $\widetilde {u}$ is the Hilbert transform of $u$,
$$
\widetilde {u}(x)=\frac1\pi v.p.\int_\mathbb R \Bigl( \frac1{x-t}+\frac{t}{t^2+1}u(t)\,dt \Bigr),
$$
$v\in L^\infty(\mathbb R)$, $\alpha$ is a nondecreasing function.  Furthermore, we get a non-polynomial entire function $R$ of zero exponential type which is bounded on $\Sigma$. 

A sequence $\Sigma\subset\mathbb R$ is said to be a P\'olya sequence if any entire function of zero exponential type which is
bounded on $\Sigma$ is a constant. 
As in the proof of \cite[Proposition 2.1]{ABB} we use a characterization of P\'olya sequences obtained by Mitkovski--Poltoratski \cite[Theorem X]{MP}. Namely, the fact that $R$ is bounded on $\Sigma$ implies that 
there exists a long system of intervals 
$(I_n)$ such that $\card(\Sigma\cap I_n)/|I_n|\to 0$.  

Since $\Type (V)<\pi$, by the second Beurling--Malliavin theorem we can find $\varepsilon>0$ and a long system of intervals 
$(J_n)$ such that 
\begin{equation} 
\frac{\card(\Sigma\cap J_n)}{|J_n|}\le \varepsilon, \quad\frac{\card(\mathcal Z(V)\cap J_n)}{|J_n|}\le 1-5\varepsilon.
\label{BM1}
\end{equation}
Given an interval $I=[a,b]\subset\mathbb R$ and a function $\gamma$, we set 
$$
\Delta_I[\gamma]=\inf_{[\varepsilon a+(1-\varepsilon)b,b]}\gamma-\inf_{[a,(1-\varepsilon)a+\varepsilon b)]}\gamma.
$$
Set $\gamma(x)=\pi x +v(x) -n_{\mathcal Z(V)}(x)-n_\Sigma(x)$. 
Then, by \eqref{BM1}, we have
$$
\Delta_{J_n}[\gamma]\ge \varepsilon |J_n|,\qquad n\ge n_0.
$$
A version of the second Beurling--Malliavin theorem (see \cite[Proposition 3.1]{ABB} and the references to the work of 
Makarov--Poltoratski \cite{MAP} there) tells that such $\gamma$ 
cannot be represented as $\widetilde {w}-\alpha$, where $\alpha$ is nondecreasing and $w\in L^1(dx/(1+x^2))$.
By \eqref{BM}, $\gamma=-\widetilde {u}-\alpha$, and we get a contradiction which completes the proof.

Now we apply our assertion to $G=VW$, $\Lambda_1=\mathcal Z(W)$, $\Lambda_2=\mathcal Z(V)$. 
Then the mixed system
$$
\bigl({\bf k}_\lambda\bigr)_{\lambda\in\mathcal Z(V)}\cup \Bigl(\frac{VW}{\cdot-\mu}\Bigr)_{\mu\in \mathcal Z(W)} 
$$
is complete in $\Pw$.  

Therefore, to prove \eqref{d09}, it suffices to verify that 
$$
F\perp \frac{VW}{\cdot-\mu},\qquad  \mu\in \mathcal Z(W),
$$
that is 
$$
\sum_{k\in\mathbb Z}\frac{F(k)V(k)W(k)}{k-\mu}=0,\qquad \mu\in \mathcal Z(W),
$$
or, by \eqref{d10},
$$
\sum_{k\in\mathbb Z}(-1)^k\frac{U(k)W(k)}{k-\mu}=0,\qquad \mu\in \mathcal Z(W).
$$
The latter relation follows from the equality 
$$
\sum_{k\in\mathbb Z}(-1)^k\frac{U(k)W(k)}{k-z}=\frac{\pi U(z)W(z)}{\sin \pi z},
$$
which is just the Kotelnikov--Nyquist--Shannon--Whittaker formula.

In the opposite direction, suppose that $F$ satisfies \eqref{d09}. Then we have   
\begin{equation}
H|\Lambda=0 \implies H\perp F,\qquad H\in \Pw. 
\label{d11}
\end{equation}

We introduce the space 
$$
\mathcal H_V=\bigl\{f\in {\rm Hol}(\mathbb C):fV\in\Pw\bigr\}
$$
with the norm given by 
$$
\|f\|_{\mathcal H_V}=\|fV\|_{\Pw}. 
$$
For every $w\in\mathbb C\setminus\mathbb R$, the evaluation functional $\mathcal H_V\ni F \mapsto F(w)$ is continuous. 
If $(f_n)$ is a Cauchy sequence in $\mathcal H_V$, then $(f_nV)$ is a Cauchy sequence in $\Pw$, and, hence, converges to 
a function $g\in\Pw$. Next, $\mathcal Z(g)\supset \mathcal Z(V)$, and, hence, $f=g/V\in {\rm Hol}(\mathbb C)$. Therefore, 
$(f_n)$ converges to $f$ in $\mathcal H_V$.

Therefore, $\mathcal H_V$ is a Hilbert space of entire functions. Furthermore, if $f\in\mathcal H_V$ and if $f(\lambda)=0$,  
$f_\lambda(z)=\frac{z-\overline{\lambda}}{z-\lambda}f(z)$, then $f^*,f_\lambda\in\mathcal H_V$, and 
$$
\|f^*\|_{\mathcal H_V}=\|f_\lambda\|_{\mathcal H_V}=\|f\|_{\mathcal H_V}.
$$ 
Thus, $\mathcal H_V$ is a de Branges space of entire functions, see for example, \cite{B,deB}. 
Choose $\varepsilon\in (0,(\pi-\Type(V))/2)$. By the Beurling--Malliavin multiplier theorem, we find an entire function $\varphi\not=0$ 
of exponential type at most $\varepsilon$ such that $\varphi V\in L^\infty(\mathbb R)$. Set $f(z)=\varphi(z)\sin(\varepsilon z)/z$. 
Then $fV\in L^2(\mathbb R)$, $\Type(fV)<\pi$, and, hence, $fV\in\Pw$, $f\in\mathcal H_V$. Thus, $\mathcal H_V\not=\{0\}$.

Now, $\mathcal H_V$ is a non-trivial de Branges space of entire functions. 
By \cite[Theorem 22]{deB} we can choose a real entire function $A$ with real zeros $(a_n)_{n\in\mathbb Z}$ such that 
$\mathcal Z(A)\cap\mathbb Z=\emptyset$, the system of the reproducing kernels of $\mathcal H_V$ at the points 
$a_n$, $n\in\mathbb Z$, or, equivalently, the system
$$
\Bigl(\frac{A}{\cdot-a_n}\Bigr)_{n\in\mathbb Z}
$$
are orthogonal bases in $\mathcal H_V$. Then $\Type (A)+\Type (V)=\pi$ otherwise, 
the function $z\mapsto A(z)\sin(\varepsilon z)/z$ would be in $\mathcal H_V$ and orthogonal to $\mathcal H_V$) and $A\in\Cart$
 (because $AV/(\cdot-a_n)\in\Pw$). 

Next, \eqref{d11} is equivalent to 
\begin{equation}
F\perp \frac{AV}{\cdot-a_n},\qquad n\in \mathbb Z. 
\label{d12}
\end{equation}

By the Parseval theorem, \eqref{d12} is equivalent to 
\begin{equation}
\sum_{k\in\mathbb Z}\frac{F(k)A(k)V(k)}{k-a_n}=0,\qquad n\in \mathbb Z.
\label{d15}
\end{equation}

Finally, \eqref{d15} implies that 
\begin{equation}
\sum_{k\in\mathbb Z}\frac{F(k)A(k)V(k)}{k-z}=\frac{\pi A(z)U(z)}{\sin \pi z},
\label{d16}
\end{equation}
for some entire function $U\in\Cart$. Furthermore, $\Type (A)+\Type (U)\le \pi$, and hence, $\Type (U)\le \Type (V)$. 
Comparing the residues on both sides in \eqref{d15}, we obtain \eqref{d10}.
\end{proof}

\begin{remark} \label{rem77} A similar argument using formula \eqref{d16} gives the following variant of the previous assertion. 
Let $F\in\Pw$, let $V\in\Cart$ be a real entire function with real simple zeros and with $\Type (V)<\pi$ such that  
$$
F\in\Spa_{\lambda\in\mathcal Z(V)}{\bf k}_\lambda.
$$
Then there exists $U\in\Cart$ such that $\Type(U)\le \Type(V)$ and 
\begin{align*}
F(n)&=(-1)^n\frac{U(n)}{V(n)},\qquad n\in\mathbb Z\setminus \mathcal Z(V),\\
U(n)&=0,\qquad n\in\mathbb Z\cap\mathcal Z(V).
\end{align*}
\end{remark}

An immediate application of Proposition~\ref{thm1c}, Lemmata~\ref{lem22}, \ref{lem28}, and Theorem~\ref{thm22} is the following result.

\begin{theorem}
There exists $\varepsilon>0$ 
such that if $\omega\in L^2(\mathbb R_+)$ is a decreasing function satisfying \eqref{d01} and \eqref{d17}, if $(\zeta_m)_{m\in\mathbb Z}$ is the sequence of independent standard Gaussian complex variables, and if $f\in L^2[-\pi,\pi]$ is such that 
$$
\widehat f(n)=\zeta_n\omega(|n|),\qquad n\in\mathbb Z,
$$
then, almost surely,  
$$
f\in\Spa\bigl\{e^{i\lambda t}: \lambda\in\mathcal Z(V)\bigr\},
$$
where $V\in\Cart$ is a real entire function with real zeros, and $D(\mathcal Z(V))=D_{BM}(\mathcal Z(V))<1-\varepsilon$. 
Moreover, the system $\{e^{i\lambda t}: \lambda\in\Lambda\}$ is a Riesz basis in its closed linear span in $L^2[-\pi,\pi]$, and 
$$
f(t)=\sum_{\lambda\in \mathcal Z(V)} a_\lambda e^{i\lambda t}
$$
with convergence in $L^2[-\pi,\pi]$, for some coefficients $(a_\lambda)_{\lambda\in\Lambda}\in\ell^2$.
\label{thm8}
\end{theorem}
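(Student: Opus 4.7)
The plan is to concatenate Proposition~\ref{thm1c}, Lemma~\ref{lem22}, and Theorem~\ref{thm22}, with a sign twist that produces the $(-1)^n$ factor appearing in the interpolation identity of Theorem~\ref{thm22}. Since the sequence $((-1)^n\zeta_n)_{n\in\mathbb Z}$ has the same joint law as $(\zeta_n)_{n\in\mathbb Z}$, I would first apply Proposition~\ref{thm1c} to $((-1)^n\zeta_n)$. This produces, almost surely, a meromorphic function $F$ with a separated real pole set $Q=(q_k)_{k\in\mathbb Z}$ satisfying $F(m)=(-1)^m\zeta_m\omega(|m|)$ for $m\in\mathbb Z$, the bound $|F(z)|\lesssim 1+1/\dist(z,Q)$, and the density estimate $|n_Q(t)-(1-\varepsilon_0)t|\lesssim 1+|t|^{2/3}$ for some $\varepsilon_0=\varepsilon_0(\delta)>0$.

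Next, Lemma~\ref{lem22} applied with $C=1-\varepsilon_0$ gives $U,V\in\Cart$ with $F=U/V$ and $\Type U\le\Type V=(1-\varepsilon_0)\pi$, where $V$ is the canonical product over $Q$ (hence real entire). The explicit construction inside the proof of Proposition~\ref{thm1} (poles at $s-K^{-2}(s^2+1)^{-2}$ for $s\in U$ and at $Tn\pm(1/2+O(\gamma_1))$ for $n\in S$) further shows $\mathcal Z(V)=Q\subset\mathbb R\setminus\mathbb Z$. Undoing the sign twist and using that $\mathcal F f(n)$ is proportional to $\widehat f(n)$, we obtain $\mathcal F f(n)=(-1)^n\tilde U(n)/V(n)$ for an appropriate scalar multiple $\tilde U\in\Cart$ of $U$, which is exactly the interpolation identity required by Theorem~\ref{thm22}. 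All remaining hypotheses of that theorem are in place, so it delivers $\mathcal F f\in\Spa\{{\bf k}_\lambda:\lambda\in\mathcal Z(V)\}$; applying $\mathcal F^{-1}$ then yields $f\in\Spa\{e^{i\lambda t}:\lambda\in\mathcal Z(V)\}$. Choosing $\varepsilon=\varepsilon_0$ (using $\pi>1$) produces the bound $\Type V<\pi-\varepsilon$.

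The one step that goes beyond direct quotation of the preceding results is the $\ell^2$-coefficient claim, which Theorem~\ref{thm22} does not supply. For this I would invoke Beurling's classical interpolation theorem for $\Pw$: since $\mathcal Z(V)=Q$ is uniformly separated with upper density at most $1-\varepsilon_0<1$, the reproducing kernels $({\bf k}_\lambda)_{\lambda\in\mathcal Z(V)}$ form a Riesz sequence in $\Pw$, so each element of their closed linear span admits a unique $\ell^2$ expansion, and the same coefficients furnish the required exponential series for $f$ in $L^2[-\pi,\pi]$. The only minor technical check in the argument is the disjointness $Q\cap\mathbb Z=\varnothing$, which is visible from the explicit pole locations recalled above.
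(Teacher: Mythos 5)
Your reduction to Proposition~\ref{thm1c} + Lemma~\ref{lem22} + Theorem~\ref{thm22} is the intended route; the sign trick of applying Proposition~\ref{thm1c} to $((-1)^n\zeta_n)$ (same law as $(\zeta_n)$) is a clean way to produce the $(-1)^n$ in \eqref{d10}, and the checks that $V$ is a real entire function, that $\mathcal Z(V)\cap\mathbb Z=\emptyset$, and that $\Type V<\pi-\varepsilon$ are correct.

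The gap is in the $\ell^2$-coefficient step. Beurling's interpolation theorem for $\Pw$ requires the \emph{upper uniform} Beurling density $D^+(Q)=\lim_{r\to\infty}\sup_x\tfrac1r\#\bigl(Q\cap[x,x+r]\bigr)$ to be strictly below $1$. The estimate \eqref{lalim}, i.e.\ $|n_Q(t)-(1-\varepsilon_0)t|\lesssim 1+|t|^{2/3}$, only pins down the \emph{average} density to $1-\varepsilon_0$; it does not control the count $n_Q(x+r)-n_Q(x)$ uniformly in $x$, since for $|x|\gg r^{3/2}$ the error $|x|^{2/3}$ swamps $r$. In fact $D^+(Q)=1$ almost surely: $Q$ is a small perturbation of $\mathbb Z$ in which one integer is deleted per random block $\Delta_n$, $n\in S$, and $S$ is an i.i.d.\ Bernoulli subset of $\mathbb Z$ which almost surely has arbitrarily long gaps, so there are windows $[x,x+r]$ of every length $r$ containing $\approx r$ points of $Q$. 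Thus Beurling's sufficient condition fails and cannot be invoked. The conclusion you want (that $({\bf k}_\lambda)_{\lambda\in\mathcal Z(V)}$ is a Riesz sequence) is nonetheless true, but must be reached differently; the paper obtains it via Avdonin's theorem, complementing $\mathcal Z(V)$ to a set $\Lambda\subset\mathbb R$ for which $\{e^{i\lambda t}\}_{\lambda\in\Lambda}$ is a Riesz basis of $L^2[-\pi,\pi]$, and then using that a subfamily of a Riesz basis is a Riesz sequence (so the Riesz-basis expansion of $f$ is supported on $\mathcal Z(V)$ with $\ell^2$ coefficients). Replacing your Beurling-density step with this Avdonin completion repairs the proof.
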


\begin{proof}
We need to verify only the last statement. 
Let us recall that the zeros of $V$ are $\lambda_s=s+\delta_s$, $s\in U$, with $0\ne|\delta_s|\to 0$, $|s|\to\infty$, and 
$\lambda_{Ty}=Ty-\alpha_{Ty}$, $\lambda_{Ty+1}=Ty-\alpha_{Ty+1}$, $y\in S$, with $|\alpha_{Ty}+1/2|<1/4$, $|\alpha_{Ty+1}-1/2|<1/4$. 
Set $\lambda_{Ty}=Ty+3/2$, $y\in S$, and consider the system $\Lambda=(\lambda_n)_{n\in\mathbb Z}$. Then 
$\mathcal Z(V)\subset \Lambda$, $\Lambda\cap\mathbb Z=\emptyset$ and the points of $\Lambda$ are separated. 

A result of Avdonin \cite[Theorem 2]{Av} (an extension of the Kadec $1/4$ theorem) tells that if a system $\Lambda=(\lambda_n)_{n\in\mathbb Z}=(n+\delta_n)_{n\in\mathbb Z}$ is separated, $\Lambda\cap\mathbb Z=\emptyset$, and if for some $H\in\mathbb N$, $0<\delta<1/4$ we have
\begin{equation}
\Bigl | \sum_{kH\le n+\delta_n\le (k+1)H} \delta_n \Bigr|\le \delta H,
\label{avd}
\end{equation}
then the system $\{e^{i\lambda t}: \lambda\in\Lambda\}$ is a Riesz basis in $L^2[-\pi,\pi]$. Finally, for large $T$, say for $T\ge 100$ 
we can find large $H$ and $0<\delta<1/4$ such that our system $\Lambda$ satisfies condition \eqref{avd}. This completes the proof. 
\end{proof}

The assertion of Theorem~\ref{thm8} contains Theorem~\ref{t1} and gives more information on the set of frequencies we need to approximate $f$. 


  
\begin{remark} An alternative way to obtain the result of Theorem~\ref{thm8} would be to interpolate only the values $\widehat{f}(m)$, $m \in \mathbb{Z} \setminus U$, using the corresponding poles. After that, we could just complete the resulting set of exponents by the exponents with the set of frequencies $U$ using the Riesz basis property.  
\label{rem16dop}
\end{remark}
  
We complete our theorem by the following two remarks. 
The first of them tells that the Beurling--Malliavin density of the set of frequencies we need to approximate elements of $L^2[-\pi,\pi]$ cannot be smaller than $1/2$.
The second tells that there are $f\in L^2[-\pi,\pi]$ that cannot be approximated using any set of frequencies of Beurling--Malliavin 
density smaller than $1$.

\begin{remark}
Let $U,V,U_1,V_1$ be entire functions of exponential type such that $\Type (U)\le \Type (V)$, $\Type (U_1)\le \Type (V_1)$, $\bigl(\mathcal Z(V)\cup \mathcal Z(V_1)\bigr)\cap\mathbb Z=\emptyset$,
\begin{equation}
\frac{U(n)}{V(n)}=\delta_{0n}+\frac{U_1(n)}{V_1(n)},\qquad n\in\mathbb Z.
\label{d18}
\end{equation}
Then 
$$
\max\bigl(\Type (V),\Type (V_1)\bigr)\ge \frac\pi2.
$$
\label{rem9}
\end{remark}

\begin{proof} Suppose that $\max\bigl(\Type (U),\Type (U_1),\Type (V),\Type (V_1)\bigr)< \frac\pi2$. Let $F=U_1V-UV_1$. Then $zF$ is an entire functions of exponential type less than $\pi$ vanishing on $\mathbb Z$. Hence, $F=0$ which contradicts \eqref{d18}.
\end{proof}


\begin{remark} \label{rem10} There exist $F\in\Pw$ such that if $V\in\Cart$ is a real entire function with simple real zeros, and if $\Type(V)<\pi$, then 
$$
F\not\in\Spa_{\lambda\in\mathcal Z(V)}{\bf k}_\lambda.
$$
\end{remark} 

\begin{proof} 
Since $\Pw{\mid}_{\mathbb Z}=\ell^2(\mathbb Z)$, we can take $F\in\Pw$ such that 
$$
\limsup_{|n|\to\infty}\frac{\log F(n)}{|n|}< 0.
$$
Suppose that for some real entire function $V$ with simple real zeros and such that $\Type(V)<\pi$, we have 
$$
F\in\Spa_{\lambda\in\mathcal Z(V)}{\bf k}_\lambda.
$$
By Remark~\ref{rem77}, there exists $U\in\Cart$ such that $\Type(U)\le \Type(V)$ and 
\begin{align*}
F(n)&=(-1)^n\frac{U(n)}{V(n)},\qquad n\in\mathbb Z\setminus \mathcal Z(V),\\
U(n)&=0,\qquad n\in\mathbb Z\cap\mathcal Z(V).
\end{align*}
Then $U$ decays exponentially on $\mathbb Z$, and by Cartwright's theorem (\cite[Section 21.2]{L}), $U=0$ and then $F=0$.
\end{proof}

\subsection*{Acknowledgments} 
The authors are grateful to the referee for numerous pertinent remarks and suggestions. 

The first and the third authors are winners of the ``Leader'' competition conducted by the Foundation
for the Advancement of Theoretical Physics and Mathematics ``BASIS'' and would
like to thank its sponsors and jury.

\end{document}